 \newtheorem{thm}{Theorem}[section]
 \newtheorem{cor}[thm]{Corollary}
 \newtheorem{lem}[thm]{Lemma}
 \newtheorem{prop}[thm]{Proposition}
 \newtheorem{defn}[thm]{Definition}
 \theoremstyle{definition}
 \theoremstyle{remark}
\begin{document}

\title[OPERATOR STRUCTURE OF FRAMES]
{On abstract results of Operator representation of frames in
Hilbert spaces}

\author[J. Cheshmavar]{Jahangir Cheshmavar$^{*}$}

\address{Department of Mathematics, Payame Noor University, P.O.BOX 19395-3697, Tehran, IRAN.}
\email{j$_{_-}$cheshmavar@pnu.ac.ir}

\author[A. Dallaki]{Ayyaneh Dallaki }
\address{Department of Mathematics, Payame Noor University, P.O.Box 19395-3697, Tehran, Iran.}
\email{ayyanehdallaki@student.pnu.ac.ir }

\author[J. Baradaran]{ Javad Baradaran}

\address{Department of Mathematics, Jahrom University, P.B.7413188941, Jahrom, Iran.}
\email{baradaran@jahromu.ac.ir}

\thanks{ 2010 Mathematics Subject Classification: Primary 42C15; Secondary 47B99.}

\keywords{Frame; iterated action; operator representation; Riesz
representation theorem; Hahn-Banach theorem; spectrum of an
operator.
\\
\indent $^{*}$ Corresponding author} \maketitle

\begin{abstract}
In this paper, we give a multiplication operator representation of
bounded self-adjoint operators $T$ on a Hilbert space
$\mathcal{H}$ such that $\{T^k\varphi\}_{k=0}^{\infty}$ is a frame
for $\mathcal{H}$, for some $\varphi \in \mathcal{H}$. We state a
necessary condition in order for a frame $\{f_k\}_{k=1}^{\infty}$
to have a representation of the form $\{T^kf_1\}_{k=0}^{\infty}$.
Frame sequence $\{T^k\varphi\}_{k=0}^{\infty}$ with the synthesis
operator $U$, have also been characterized in terms of the
behavior spectrum of $U^*U|_{N(U)^{\perp}}$. Next, using the
operator response of an element with respect to a unit vector in
$\mathcal{H}$, frames $\{f_k\}_{k=1}^{\infty}$ of the form
$\{T^nf_{1}\}_{n=0}^{\infty}$ are characterized. We also consider
stability frames as $\{T^k\varphi\}_{k=0}^{\infty}$. Finally, we
conclude this note by raising a conjecture connecting frame theory
and operator theory.
\end{abstract}


\section{\textbf{Introduction and Preliminaries}}\label{Sec1}
The representation problem of a  frame in a Hilbert space
$\mathcal{H}$ as the iterative sequence
$\{T^k\varphi\}_{k=0}^{\infty}$, for some linear operator $T$ on
$\mathcal{H}$ and for some $\varphi\in \mathcal{H}$ is called
dynamical sampling problem.  This subject is a new research topic
in harmonic analysis and it has been studied by Aldroubi and
Petrosyan \cite{Aldroubi1.2016} to give  results on frames
obtaining via the iterated actions of normal operators on finite
dimensional spaces (e.g. see \cite{Aldroubi2.2017,
Aldroubi3.2017}). Recently, Christensen et al. wrote two papers
\cite{Christensen.2017, Christensen.2019} on the operator
representation of frames and  frame properties arising via the
iterated actions of linear  operators on a Hilbert spaces.

 We refer the reader to \cite{Aldroubi1.2016, Aldroubi3.2017, Bayart, Christensen.2016, Philipp} for an introduction of frame theory, dynamical sampling problem, and its
applications. Let $\{f_k\}_{k=1}^{\infty}$ be a frame for a
Hilbert space $\mathcal{H}$ which spans an infinite dimensional
subspace of $\mathcal{H}$. A natural question to ask
 whether is there a linear operator $T$ on $\mathcal{H}$ such that
$f_{k+1}=Tf_k$, for all $k\in \mathbb{N}$. It is proved in
\cite{Christensen.2019}  that such an operator exists if and only
if the frame $\{f_k\}_{k=1}^{\infty}$ is linearly independent
(Proposition \ref{prop.1}). Also,  it is shown that the operator
$T$ is bounded if and only if the kernel of the synthesis operator
of $\{f_k\}_{k=1}^{\infty}$ is invariant under the right shift
operator on $\ell^2(\mathbb{N}).$ In the affirmative case;
$\{f_k\}_{k=1}^{\infty}=\{T^kf_1\}_{k=0}^{\infty}$ (see
\cite[Theorem (2.3)]{Christensen.2017}).

In this work, we study frames of the from
$\{T^k\varphi\}_{k=0}^{\infty}$ in Hilbert spaces. Applying
functional calculus whenever $T$ is a bounded self-adjoint
operator on $\mathcal{H}$, we give a multiplication operator
representation of $T$ such that for some $\varphi \in
\mathcal{H}$, the sequence $\{T^k\varphi\}_{k=0}^{\infty}$ is a
frame for $\mathcal{H}$ (Proposition \ref{prop.3}). Furthermore,
necessary condition under which a frame $\{f_k\}_{k=1}^{\infty}$
in a Hilbert space $\mathcal{H}$ to has a representation of the
form $\{T^kf_1\}_{k=0}^{\infty}$ is obtained (Proposition
\ref{prop.4}). We give a characterization of the frame sequence
$\{T^k\varphi\}_{k=0}^{\infty}$ with the synthesis operator $U$ in
terms of the behavior spectrum of $U^*U|_{N(U)^{\perp}}$
(Proposition \ref{Prop.5}). Using the operator response of an
element with respect to a unit vector in $\mathcal{H}$, we
characterize frames $\{f_k\}_{k=1}^{\infty}$ of the form
$\{T^kf_1\}_{k=0}^{\infty}$, with bounded linear operator $T$
(Proposition \ref{prop.6}). At the end, we consider stability
frames of the forms
 $\{T^k\varphi\}_{k=0}^{\infty}$ (Proposition \ref{prop.8}).

Throughout this work,  $\mathcal{H}$ denotes a separable Hilbert
space. As usual, the set of all bounded linear operators on
$\mathcal{H}$ is denoted by $B(\mathcal{H})$. The index set is the
natural numbers $\mathbb{N}$ and $\mathbb{N}_0:=\mathbb{N} \cup
\{0\}$. The linear span of a set $\mathcal{V}$ will denote by
$span\mathcal{V}$, and whose closure denotes by
$\overline{span}\mathcal{V}$. Finally, the notation $\sigma(T)$
for the spectrum of an operator $T \in B(\mathcal{H})$ is used.

\begin{defn}\cite{Christensen.2016}
A sequence of vectors $\{f_k\}_{k=1}^{\infty} \subset \mathcal{H}$
is a \textit{frame} for $\mathcal{H}$ if there exist constants
$A,B>0$ such that
\begin{eqnarray}\label{form1}
A \parallel f\parallel^2\leq \sum_{k=1}^{\infty}\mid\langle f,
f_k\rangle \mid^2\leq B
\parallel f\parallel^2, \; \forall f \in \mathcal{H}.
\end{eqnarray}

\end{defn}
The sequence $\{f_k\}_{k=1}^{\infty}$ is called a \textit{frame
sequence} for $\mathcal{H}$ if the inequality (\ref{form1}) holds
for all $f\in \overline{span}\{f_k\}_{k=1}^{\infty}$. Also,
$\{f_k\}_{k=1}^{\infty}$ is said to be a \textit{Bessel sequence}
for $\mathcal{H}$ whenever the upper condition in (\ref{form1})
holds. If $A=B$, it is said to be an $A$-tight frame for
$\mathcal{H}$. It follows from definition that if
$\{f_k\}_{k=1}^{\infty}$ is a frame for $\mathcal{H}$, then we
have $\overline{span}\{f_k\}_{k=1}^{\infty}=\mathcal{H}.$ If
$\{f_k\}_{k=1}^{\infty}$ is a Bessel sequence for $\mathcal{H}$,
then the \textit{synthesis operator} is given by
\begin{eqnarray*}
U:\ell^2(\mathbb{N})\rightarrow \mathcal{H};\,\
U(\{c_k\}_{k=1}^{\infty})=\sum_{k=1}^{\infty}c_k f_k~.
\end{eqnarray*}
It is known that $U$ is a well-defined and bounded operator (see
\cite{Christensen.2016}). The adjoint operator $U^{*}$ is given by
$U^{*}f=\{\langle f, f_{k} \rangle\}_{k=1}^{\infty}$, for all $f
\in \mathcal{H}$ and the frame operator is defined by
\begin{eqnarray*}
S:=UU^{*}:\mathcal{H} \rightarrow \mathcal{H};\,\
Sf=\sum_{k=1}^{\infty} \langle f,f_k\rangle f_k~.
\end{eqnarray*}

The availability of the representation
{$\{f_k\}_{k=1}^{\infty}=\{T^kf_1\}_{k=0}^{\infty}$ is
characterized in \cite{Christensen.2019}:

\begin{prop}\label{prop.1}
Consider any sequence $\{f_k\}_{k=1}^{\infty}$ in $\mathcal{H}$
for which $span\{f_k\}_{k=1}^{\infty}$ is infinite-dimensional.
Then the following are equivalent:
\begin{itemize}
\item[(i)] $\{f_k\}_{k=1}^{\infty}$ is linearly independent.
\item[(ii)] There exists a linear operator
$T:span\{f_k\}_{k=1}^{\infty} \rightarrow \mathcal{H}$ such that
$\{f_k\}_{k=1}^{\infty}=\{T^kf_1\}_{k=0}^{^{\infty}}$~.
\end{itemize}
\end{prop}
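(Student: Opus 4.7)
The plan is to prove both implications directly from the definitions, using the infinite-dimensionality assumption crucially only in one direction.

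For the implication (i) $\Rightarrow$ (ii), I would simply use the fact that a linearly independent family is a Hamel basis for its own linear span. Since every vector in $\operatorname{span}\{f_k\}_{k=1}^{\infty}$ has a unique finite expansion in the $f_k$'s, the assignment $Tf_k := f_{k+1}$ extends uniquely to a well-defined linear map $T:\operatorname{span}\{f_k\}_{k=1}^{\infty}\to\mathcal{H}$. An immediate induction on $k$ then yields $T^kf_1 = f_{k+1}$ for all $k\in\mathbb{N}_0$, which after reindexing gives $\{f_k\}_{k=1}^{\infty}=\{T^kf_1\}_{k=0}^{\infty}$.

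For the converse (ii) $\Rightarrow$ (i), I would argue by contradiction and rely on the hypothesis that $\operatorname{span}\{f_k\}_{k=1}^{\infty}$ is infinite-dimensional. Suppose the sequence is linearly dependent; then there is a smallest $N\geq 2$ such that $f_N\in\operatorname{span}\{f_1,\ldots,f_{N-1}\}$, say $f_N=\sum_{j=1}^{N-1}c_j f_j$. Applying the operator $T$ (which is defined on the span by hypothesis) and using $Tf_j=f_{j+1}$, I get
\begin{equation*}
f_{N+1} \;=\; Tf_N \;=\; \sum_{j=1}^{N-1}c_j f_{j+1} \;\in\; \operatorname{span}\{f_2,\ldots,f_N\} \;\subseteq\; \operatorname{span}\{f_1,\ldots,f_{N-1}\}.
\end{equation*}
Iterating this observation, one shows by induction that $f_m\in\operatorname{span}\{f_1,\ldots,f_{N-1}\}$ for every $m\geq 1$, so $\operatorname{span}\{f_k\}_{k=1}^{\infty}$ has dimension at most $N-1$, contradicting the standing hypothesis.

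The main obstacle, such as it is, lies in the converse direction: one must be careful to use the infinite-dimensionality in an essential way, because in finite dimensions a linearly dependent cyclic orbit $\{T^kf_1\}$ is perfectly possible. The argument above handles this cleanly by forcing the full orbit to lie in a fixed finite-dimensional subspace as soon as one dependence relation is present. No subtler machinery (such as the synthesis operator or its kernel) is needed at this stage; those refinements become relevant only when one further asks that $T$ be bounded, which is not part of the present statement.
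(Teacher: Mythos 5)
Your proof is correct. The paper does not actually prove this proposition --- it is quoted without proof from the reference [Christensen--Hasannasab, 2019] --- and your argument (extend $f_k\mapsto f_{k+1}$ linearly off the Hamel basis for one direction; for the other, take a first dependence $f_N\in\operatorname{span}\{f_1,\dots,f_{N-1}\}$ and propagate it with $T$ to trap the whole orbit in a finite-dimensional subspace) is exactly the standard argument given there, with the only cosmetic omission being the trivial case $f_1=0$, which already forces the span to be $\{0\}$ and so contradicts infinite-dimensionality directly.
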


We need the following results in the sequel.

\begin{prop}\cite{Christensen.2019}
\label{prop.2} If $\{T^k\varphi\}_{k=0}^{\infty}$ is a frame for
some operator $T \in B(\mathcal{H})$ and some $\varphi \in
\mathcal{H}$, then $T$ has closed range.
\end{prop}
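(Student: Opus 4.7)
The plan is to exploit the intertwining relation between $T$ and the right-shift $\mathcal{T}$ on $\ell^2(\mathbb{N})$ via the synthesis operator $U$ of the frame $\{T^k\varphi\}_{k=0}^{\infty}$, and then reduce the closed-range question to a short open-mapping argument.

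First I would index the frame as $f_{k+1}:=T^k\varphi$ and verify on the standard basis $\{e_k\}$ of $\ell^2(\mathbb{N})$ that $TUe_{k+1}=T^{k+1}\varphi=Ue_{k+2}=U\mathcal{T}e_{k+1}$, so by linearity and continuity $TU=U\mathcal{T}$ on all of $\ell^2(\mathbb{N})$. Because a frame spans $\mathcal{H}$, the synthesis operator $U$ is surjective, and hence
\[
\mathrm{Range}(T)=TU(\ell^2(\mathbb{N}))=U\mathcal{T}(\ell^2(\mathbb{N}))=U(\mathrm{Range}(\mathcal{T}))=U(e_1^{\perp}),
\]
where $e_1^{\perp}=\{c\in\ell^2(\mathbb{N}):c_1=0\}$ is the (closed) range of the isometric right-shift.

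It remains to show $U(e_1^{\perp})$ is closed in $\mathcal{H}$. Since $U$ is surjective, the open mapping theorem yields a topological isomorphism $\widehat{U}:\ell^2(\mathbb{N})/N(U)\to\mathcal{H}$ induced by $U$, and a closed subspace $M\subseteq\ell^2(\mathbb{N})$ satisfies: $U(M)$ is closed in $\mathcal{H}$ if and only if $M+N(U)$ is closed in $\ell^2(\mathbb{N})$.

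The one place where the argument could look delicate is controlling $e_1^{\perp}+N(U)$, but this trivializes because $e_1^{\perp}$ has codimension one in $\ell^2(\mathbb{N})$: if $N(U)\subseteq e_1^{\perp}$ then $e_1^{\perp}+N(U)=e_1^{\perp}$, and otherwise any $c\in N(U)$ with $c_1\neq 0$ combined with $e_1^{\perp}$ spans all of $\ell^2(\mathbb{N})$, forcing $e_1^{\perp}+N(U)=\ell^2(\mathbb{N})$. Both alternatives are closed subspaces, so $\mathrm{Range}(T)=U(e_1^{\perp})$ is closed.
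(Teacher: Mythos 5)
Your argument is correct. Note that the paper itself offers no proof of this proposition --- it is imported verbatim from \cite{Christensen.2019} --- so the only meaningful comparison is with the argument in that reference. The published proof also begins exactly where you do: the intertwining relation $TU=U\mathcal{T}$ on the canonical basis and the surjectivity of the synthesis operator $U$ give $R(T)=U(\mathcal{T}\ell^2(\mathbb{N}))$. It then identifies $U\mathcal{T}$ as the synthesis operator of the deleted system $\{T^k\varphi\}_{k=1}^{\infty}$ and invokes the frame-deletion theorem (removing one element from a frame leaves either a frame for $\mathcal{H}$ or a frame sequence), so that $R(T)=\overline{span}\{T^k\varphi\}_{k\geq 1}$ is closed. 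You replace that frame-theoretic input with a purely operator-theoretic one: the quotient/open-mapping criterion that, for surjective $U$ and closed $M$, the image $U(M)$ is closed if and only if $M+N(U)$ is closed, together with the observation that $e_1^{\perp}$ has codimension one, so that $e_1^{\perp}+N(U)$ is forced to be either $e_1^{\perp}$ or all of $\ell^2(\mathbb{N})$. Each step checks out, including the quotient lemma (a set in $\ell^2(\mathbb{N})/N(U)$ is closed iff its preimage under the quotient map is closed, and $\widehat{U}$ is a topological isomorphism). Your route needs nothing about frames beyond the surjectivity of $U$, and as a bonus the dichotomy in your last step recovers the sharper known conclusion: $R(T)$ is either all of $\mathcal{H}$ (when $N(U)\not\subseteq e_1^{\perp}$) or a closed subspace of codimension one (when $N(U)\subseteq e_1^{\perp}$), which is exactly the refinement proved in \cite{Christensen.2019}.
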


\begin{thm}\cite{Aldroubi1.2016}
\label{thm.1} Let $T \in B(\mathcal{H})$ and $f \in \mathcal{H}$
such that $\{T^n f\}_{n=0}^{\infty}$ is a frame for $\mathcal{H}$.
Then for every $\varphi \in \mathcal{H}, (T^{\ast})^n\varphi
\rightarrow 0$ as $n\rightarrow \infty$.
\end{thm}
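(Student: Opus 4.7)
The key idea is to use the lower frame bound to control $\|(T^*)^m \varphi\|$ by the tail of a convergent series, and then use the upper frame bound to see that this tail must vanish.

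Concretely, let $A, B > 0$ denote frame bounds for $\{T^n f\}_{n=0}^{\infty}$, so that for every $g \in \mathcal{H}$,
\begin{equation*}
A \|g\|^2 \leq \sum_{n=0}^{\infty} |\langle g, T^n f\rangle|^2 \leq B \|g\|^2.
\end{equation*}
Fix $\varphi \in \mathcal{H}$. The plan is to apply the lower bound with the special choice $g = (T^{\ast})^m \varphi$ for each $m \in \mathbb{N}_0$, and exploit the adjoint identity $\langle (T^{\ast})^m \varphi, T^n f\rangle = \langle \varphi, T^{n+m} f\rangle$ to rewrite the right-hand sum as a tail of the series corresponding to $\varphi$ itself.

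Carrying this out gives
\begin{equation*}
A \|(T^{\ast})^m \varphi\|^2 \leq \sum_{n=0}^{\infty} |\langle \varphi, T^{n+m} f\rangle|^2 = \sum_{k=m}^{\infty} |\langle \varphi, T^k f\rangle|^2.
\end{equation*}
Now, by the upper frame bound applied to $\varphi$, the full series $\sum_{k=0}^{\infty} |\langle \varphi, T^k f\rangle|^2$ converges (being bounded by $B\|\varphi\|^2$), so its tail tends to $0$ as $m \to \infty$. Dividing by $A$ and letting $m \to \infty$ yields $\|(T^{\ast})^m \varphi\| \to 0$, which is exactly the claim.

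There is no genuine obstacle here; the only subtlety is recognizing that the lower frame inequality, usually thought of as a stability estimate on $\mathcal{H}$, can be leveraged backwards by plugging in the iterates $(T^{\ast})^m \varphi$ themselves. Once the tail identity via the adjoint is in place, the conclusion is immediate from the summability guaranteed by the upper frame bound. No further structural hypothesis on $T$ (e.g.\ normality, closed range, etc.) is needed for this argument, though of course Proposition \ref{prop.2} is available if one wishes to supplement the conclusion.
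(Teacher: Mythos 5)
Your argument is correct and complete: applying the lower frame bound to $g=(T^{\ast})^m\varphi$, using $\langle (T^{\ast})^m\varphi, T^n f\rangle=\langle \varphi, T^{n+m}f\rangle$ to identify the resulting sum with the tail $\sum_{k=m}^{\infty}|\langle\varphi,T^kf\rangle|^2$ of a series that converges by the upper bound, is exactly the standard proof of this result. The paper itself states this theorem as a citation from Aldroubi--Petrosyan and gives no proof, so there is nothing internal to compare against; your argument matches the one in the cited source.
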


\begin{thm}\cite{Cazassa.1997}
\label{thm.2} Let $\{f_k\}_{k=1}^{\infty}$ be a frame for
$\mathcal{H}$ with bounds $A,B$. Let
$\{g_k\}_{k=1}^{\infty}\subseteq \mathcal{H}$ and assume that
there exist constants $\lambda_1, \lambda_2, \mu\geq 0$ such that
$\max(\lambda_1+\frac{\mu}{\sqrt{A}}, \lambda_2)<1$ and

\begin{eqnarray*}
\|\sum_{k=1}^nc_k( f_k-g_k)\|\leq \lambda_1 \|\sum_{k=1}^nc_k
f_k\|+ \lambda_2 \|\sum_{k=1}^nc_k g_k\|+\mu
(\sum_{k=1}^n|c_k|^2)^{1/2},
\end{eqnarray*}
for all $c_1,\cdots, c_n (n\in \mathbb{N})$. Then
$\{g_k\}_{k=1}^{\infty}$ is a frame with bounds
$$A(1-\frac{\lambda_1+\lambda_2+\frac{\mu}{\sqrt{A}}}{1+\lambda_2})^2,
\,\
B(1+\frac{\lambda_1+\lambda_2+\frac{\mu}{\sqrt{B}}}{1-\lambda_2})^2.$$
\end{thm}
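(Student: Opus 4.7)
The plan is to reformulate the hypothesis as an operator inequality between the synthesis operators $U, V \colon \ell^{2}(\mathbb{N}) \to \mathcal{H}$ of $\{f_{k}\}_{k=1}^{\infty}$ and $\{g_{k}\}_{k=1}^{\infty}$, and then carry out a Paley--Wiener-type perturbation argument. A standard density step extends the hypothesis from finitely supported $c$ to all of $\ell^{2}$, giving $\|(U-V)c\| \leq \lambda_{1}\|Uc\| + \lambda_{2}\|Vc\| + \mu\|c\|_{\ell^{2}}$ for every $c \in \ell^{2}(\mathbb{N})$.

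The Bessel (upper) bound comes out directly. Combining $\|Vc\| \leq \|Uc\| + \|(U-V)c\|$ with the hypothesis rearranges to $(1-\lambda_{2})\|Vc\| \leq (1+\lambda_{1})\|Uc\| + \mu\|c\|_{\ell^{2}}$; substituting $\|Uc\| \leq \sqrt{B}\|c\|_{\ell^{2}}$ and dividing by $1-\lambda_{2}>0$ delivers exactly the upper frame constant $B\bigl(1 + \tfrac{\lambda_{1}+\lambda_{2}+\mu/\sqrt{B}}{1-\lambda_{2}}\bigr)^{2}$ asserted in the theorem.

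For the lower bound, introduce the auxiliary operator $T := VU^{*}S^{-1} \colon \mathcal{H} \to \mathcal{H}$, where $S = UU^{*}$ is the frame operator of $\{f_{k}\}_{k=1}^{\infty}$, so that $Th = \sum_{k}\langle h, S^{-1}f_{k}\rangle g_{k}$. For $h \in \mathcal{H}$, put $c := U^{*}S^{-1}h$; the canonical reconstruction gives $Uc = h$, and $\|c\|_{\ell^{2}}^{2} = \langle h, S^{-1}h\rangle \leq \|h\|^{2}/A$ supplies $\|c\|_{\ell^{2}} \leq \|h\|/\sqrt{A}$. Rearranging the hypothesis as $(1-\lambda_{1})\|Uc\| - \mu\|c\|_{\ell^{2}} \leq (1+\lambda_{2})\|Vc\|$ and evaluating at this $c$ produces
$$
\|Th\| \;\geq\; \frac{1-\lambda_{1}-\mu/\sqrt{A}}{1+\lambda_{2}}\,\|h\| \;=\; \sqrt{A'/A}\,\|h\|,
$$
with $A' := A\bigl(1-\tfrac{\lambda_{1}+\lambda_{2}+\mu/\sqrt{A}}{1+\lambda_{2}}\bigr)^{2}$ the target lower frame constant; in particular $T$ is bounded below with closed range.

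The delicate remaining step is to convert this into the honest frame inequality $\sum_{k}|\langle h,g_{k}\rangle|^{2} \geq A'\|h\|^{2}$. For this I would show $T$ is an isomorphism of $\mathcal{H}$: the estimate above already yields injectivity and closed range, so what is left is to verify $T^{*} = S^{-1}UV^{*}$ is injective, equivalently that $V^{*}h \in N(U)$ forces $h=0$, which can be extracted from the operator inequality combined with the totality of $\{f_{k}\}_{k=1}^{\infty}$ in $\mathcal{H}$. Once $T$ is invertible with $\|T^{-1}\|\leq\sqrt{A/A'}$, the identity $\|S^{-1}U\|_{\ell^{2}\to\mathcal{H}}^{2} = \|S^{-1}UU^{*}S^{-1}\| = \|S^{-1}\| \leq 1/A$ transfers the bound to the adjoint through
$$
\sqrt{A'/A}\,\|h\| \;\leq\; \|T^{*}h\| \;=\; \|S^{-1}UV^{*}h\| \;\leq\; (1/\sqrt{A})\,\|V^{*}h\|_{\ell^{2}},
$$
which rearranges to $\|V^{*}h\|_{\ell^{2}} \geq \sqrt{A'}\,\|h\|$, i.e.\ the desired lower frame bound. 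The condition $\max(\lambda_{1}+\mu/\sqrt{A},\lambda_{2})<1$ is calibrated exactly to keep all intermediate constants positive, and the anticipated main obstacle is precisely this surjectivity step for $T$.
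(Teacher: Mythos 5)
The paper does not actually prove Theorem~\ref{thm.2}: it is quoted verbatim from Casazza--Christensen \cite{Cazassa.1997}, so the only possible comparison is with the proof in that source. Against that benchmark, your upper-bound argument is correct and yields exactly the stated Bessel constant, and your lower-bound estimate $\|Th\|\ge\sqrt{A'/A}\,\|h\|$ for $T=VU^{*}S^{-1}$, together with the final transfer to $\|V^{*}h\|\ge\sqrt{A'}\|h\|$ via $\|S^{-1}U\|\le 1/\sqrt{A}$, reproduces the standard argument and the exact constants.

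The genuine gap is the step you yourself flag: surjectivity of $T$. Your proposed route --- show $N(T^{*})=\{0\}$, i.e.\ that $V^{*}h\in N(U)$ forces $h=0$, ``from the operator inequality combined with the totality of $\{f_k\}_{k=1}^{\infty}$'' --- does not go through. Totality of $\{f_k\}_{k=1}^{\infty}$ only says $N(U^{*})=\{0\}$ and gives no control over $N(U)$, which for an overcomplete frame is an infinite-dimensional subspace of $\ell^{2}(\mathbb{N})$; and substituting $c=V^{*}h\in N(U)$ into the hypothesis only yields $(1-\lambda_{2})\|VV^{*}h\|\le\mu\|V^{*}h\|$, hence $\|V^{*}h\|\le\frac{\mu}{1-\lambda_{2}}\|h\|$, which is nowhere near $h=0$. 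Nor can you repair this with the obvious Neumann-series iteration $y\mapsto y-VU^{\dagger}y$: the relevant contraction constant is $\frac{\lambda_{1}+\lambda_{2}+\mu/\sqrt{A}}{1-\lambda_{2}}$, which the hypothesis $\max(\lambda_{1}+\mu/\sqrt{A},\lambda_{2})<1$ does not force below $1$ (take $\lambda_{1}=\tfrac12$, $\lambda_{2}=\tfrac{9}{10}$, $\mu=0$). In \cite{Cazassa.1997} this surjectivity is the entire content of their operator-perturbation theorem (a generalization of Hilding's lemma): one joins $U$ to $V$ by the segment $V_{t}=(1-t)U+tV$, shows each $V_{t}$ satisfies a perturbation inequality with uniform constants and is therefore bounded below off a common kernel, and propagates surjectivity from $V_{0}=U$ to $V_{1}=V$ by an open--closed argument in $t$. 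Without that (or an equivalent) ingredient, your argument only shows that $\{g_k\}_{k=1}^{\infty}$ is a Bessel sequence and that $T$ is an isomorphism onto the closed subspace $R(T)$, i.e.\ a lower bound on $\overline{R(T)}$ rather than on all of $\mathcal{H}$.
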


\bigskip

\section{\textbf{The results}}
For a given operator $T\in B(\mathcal{H})$, we define

\begin{eqnarray*}
\mathcal{V}(T):=\left\{\varphi \in \mathcal{H}: \{T^k\varphi
\}_{k=0}^{\infty}\; \mbox{is a frame for}\; \mathcal{H} \right\},
\end{eqnarray*}
and
\begin{eqnarray*}
E(\mathcal{H}):=\left\{T \in B(\mathcal{H}):\; \{T^k\varphi
\}_{k=0}^{\infty}\; \mbox{is a frame for}\; \mathcal{H},\;
\mbox{for some}\; \varphi \in \mathcal{H} \right\}.
\end{eqnarray*}
\smallskip

The following proposition is a multiplication operator
representation of self-adjoint operators in $E(\mathcal{H})$.
Recall that $C(X)$ denotes the set of all continuous functions on
a locally compact space $X$ and $C_{0}(X)$ contains all functions
that vanish at infinity. If $X$ is compact, then $C_{0}(X)=C(X).$

\begin{prop}
\label{prop.3} Let $T\in E(\mathcal{H})$ be self-adjoint and the
sequence $\{T^{k}\varphi\}_{k=0}^{\infty}$ be a frame in
$\mathcal{H}$ for some $\varphi \in \mathcal{H}$. Then there
exists a unitary operator $V$ from $\mathcal{H}$ to
$L^{2}(\sigma(T), \mu_{\varphi})$ such that
$$(VTV^{*}f)(x)=xf(x),\; \mbox{for all} \; f \in L^{2}(\sigma(T),
\mu_{\varphi}).$$
\end{prop}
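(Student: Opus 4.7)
The plan is to observe that the frame hypothesis supplies a cyclic vector and then apply the classical spectral theorem for a self-adjoint operator with a cyclic vector; the conclusion is essentially its statement. The frame hypothesis enters only through the identity $\overline{\mathrm{span}}\{T^{k}\varphi\}_{k=0}^{\infty}=\mathcal{H}$, which shows that $\varphi$ is cyclic for $T$.

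First I would invoke the continuous functional calculus for the self-adjoint operator $T\in B(\mathcal{H})$. This gives an isometric $\ast$-homomorphism $\Phi:C(\sigma(T))\to B(\mathcal{H})$ extending the polynomial calculus $p\mapsto p(T)$. Using this, I would define the positive bounded linear functional $L:C(\sigma(T))\to \mathbb{C}$ by $L(f):=\langle \Phi(f)\varphi,\varphi\rangle$. By the Riesz representation theorem, there is a unique finite positive regular Borel measure $\mu_{\varphi}$ on $\sigma(T)$ with
\begin{equation*}
\langle \Phi(f)\varphi,\varphi\rangle=\int_{\sigma(T)} f(x)\,d\mu_{\varphi}(x),\qquad f\in C(\sigma(T)).
\end{equation*}

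Next I would define $V$ on the dense subspace $\{p(T)\varphi:p\text{ polynomial}\}\subset\mathcal{H}$ (dense by cyclicity) by $V(p(T)\varphi):=p$, viewed as an element of $L^{2}(\sigma(T),\mu_{\varphi})$. The key calculation is
\begin{equation*}
\|p(T)\varphi\|_{\mathcal{H}}^{2}=\langle \Phi(|p|^{2})\varphi,\varphi\rangle=\int_{\sigma(T)}|p(x)|^{2}\,d\mu_{\varphi}(x)=\|p\|_{L^{2}(\sigma(T),\mu_{\varphi})}^{2},
\end{equation*}
which shows $V$ is well-defined and isometric. Extending $V$ by continuity yields an isometry of $\mathcal{H}$ into $L^{2}(\sigma(T),\mu_{\varphi})$. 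Surjectivity follows from the fact that polynomials are dense in $C(\sigma(T))$ by the Stone–Weierstrass theorem, and $C(\sigma(T))$ is dense in $L^{2}(\sigma(T),\mu_{\varphi})$; hence $V$ is unitary.

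Finally, I would verify the intertwining relation. For any polynomial $p$, $Tp(T)\varphi=(xp)(T)\varphi$, so
\begin{equation*}
V\,T\,V^{*}(p)=V(Tp(T)\varphi)=xp(x),
\end{equation*}
and by density this extends to $(VTV^{*}f)(x)=xf(x)$ for every $f\in L^{2}(\sigma(T),\mu_{\varphi})$. There is no substantial obstacle here; the one place care is needed is checking that the frame condition really is used (it gives $\overline{\mathrm{span}}\{T^{k}\varphi\}_{k=0}^{\infty}=\mathcal{H}$, hence cyclicity of $\varphi$), since without that the map $V$ would only be unitary onto $\overline{\mathrm{span}}\{T^{k}\varphi\}_{k=0}^{\infty}$ rather than onto all of $\mathcal{H}$.
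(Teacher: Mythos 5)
Your proof is correct and follows essentially the same route as the paper's: both arguments use the Riesz representation theorem to produce the measure $\mu_{\varphi}$ from the positive functional $f\mapsto\langle f(T)\varphi,\varphi\rangle$, define $V$ on a dense subspace via the functional calculus, verify it is an isometry with dense range (using the frame hypothesis only through cyclicity of $\varphi$), and then check the intertwining relation on polynomials. The only cosmetic difference is that you start from polynomials and invoke Stone--Weierstrass, whereas the paper works directly with $C_{0}(\sigma(T))=C(\sigma(T))$ and notes that the polynomial orbit sits inside it; the substance is identical.
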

\begin{proof}
It is easy to see that the linear functional defined by
$$\Lambda : C_{0}(\sigma(T))\longrightarrow \mathbb{C},\; \; f {\longmapsto} \Lambda(f)=\langle \varphi, f(T)\varphi \rangle,$$ is continuous, so by Riesz representation theorem there exists a positive measure $\mu_{\varphi}$ with
$\mu_{\varphi}(\sigma(T))=||\varphi||^{2}$ such that for any $f
\in C_{0}(\sigma(T))$, we have
$$\Lambda(f)=\int_{\sigma(T)} f(\lambda)d\mu_{\varphi}(\lambda).$$
Now, for each $f \in C_{0}(\sigma(T))$, we define
$V(f(T)\varphi):=f$. Clearly, $V$ is well-defined and we obtain
\begin{eqnarray*}
||f(T)\varphi||^{2}&=&\langle \varphi, f(T)^{*}f(T)\varphi \rangle=\langle \varphi, (\overline{f}f)(T)\varphi \rangle \\
&=&\int_{\sigma(T)}|f(\lambda)|^{2}d\mu_{\varphi}(\lambda)=||f||^{2}.
\end{eqnarray*}
This shows that $V$ is an isometry on $C_{0}(\sigma(T))$, so it is
continuous. Hence, we can extend $V$ to the closure of
$\{f(T)\varphi ; f \in C_{0}(\sigma(T))\}$ which is equal to
$\mathcal{H}$; because on the one hand
\begin{eqnarray*}
\{P(T)\varphi: \; P \; is \; a \; polynomial\}=span\{T^{k}\varphi,
k=0, 1,..\},
\end{eqnarray*}
is dense in $\mathcal{H}$ and on the other hand, we have
$$\{P(T)\varphi: \; P \; is \; a \; polynomial\}\subseteq
\{f(T)\varphi : f \in C_{0}(\sigma(T))\}.$$ Moreover, the
extension is an isometry as well. Since $V$ is isometry, $ranV$ is
closed and  contains $C_{0}(\sigma(T))$.  But $C_{0}(\sigma(T))$
is dense in $L^{2}(\sigma(T), \mu_{\varphi})$, so it follows that
$ranV=L^{2}(\sigma(T), \mu_{\varphi})$. Therefore, $V$ is unitary
and $V^{*}f=f(T)\varphi$, for all $f \in L^{2}(\sigma(T),
\mu_{\varphi})$. Now, using the definition of $V$ and functional
calculus for bounded self-adjoint operators, for any $f \in
L^{2}(\sigma(T), \mu_{\varphi})$, we get
$$(VTV^{*}f)(x)=(VTf(T)\varphi)(x)=xf(x),$$
which completes the proof.
\end{proof}


\smallskip
By Proposition \ref{prop.2}, if $\{f_k\}_{k=1}^{\infty}$ has a
representation of the form $\{T^k \varphi\}_{k=0}^{\infty}$, for
some $T \in B(\mathcal{H})$ and some $\varphi \in \mathcal{H}$,
then $T$ has closed range. The following result provides a
necessary condition under which a frame $\{f_k\}_{k=1}^{\infty}$
to has a structure form $\{T^k f_1\}_{k=0}^{\infty}$:

\begin{prop}
\label{prop.4} Let $\{f_k\}_{k=1}^{\infty}$ be a frame for
$\mathcal{H}$ of the form $\{T^kf_1\}_{k=0}^{\infty}$, where $T
\in B(\mathcal{H})$. Then for all $\lambda \in \mathbb{C}$, the
range of $T-\lambda I$ is dense in $\mathcal{H}$~.
\end{prop}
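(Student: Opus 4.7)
The plan is to reduce the statement to an injectivity claim about the adjoint: since $\overline{\mathrm{range}(T-\lambda I)}=\ker(T^*-\bar\lambda I)^{\perp}$, density of $\mathrm{range}(T-\lambda I)$ is equivalent to $\ker(T^*-\bar\lambda I)=\{0\}$. Accordingly I would argue by contradiction, supposing some $g\in\mathcal H\setminus\{0\}$ satisfies $T^*g=\bar\lambda g$, and aim to derive a contradiction with the frame property of $\{T^kf_1\}_{k=0}^\infty$.

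The leverage that the iterative representation $f_{k+1}=Tf_k$ gives is that the analysis coefficients of such a $g$ collapse to a geometric sequence,
$$\langle g,T^kf_1\rangle=\langle (T^*)^kg,f_1\rangle=\bar\lambda^{\,k}\langle g,f_1\rangle,\qquad k\ge 0.$$
Substituting into the frame inequality yields
$$A\|g\|^2\;\le\;|\langle g,f_1\rangle|^2\sum_{k=0}^{\infty}|\lambda|^{2k}\;\le\;B\|g\|^2,$$
from which a case split on $|\lambda|$ should finish. If $|\lambda|\ge 1$ the geometric series diverges unless $\langle g,f_1\rangle=0$; the first alternative violates the upper Bessel bound, and the second collapses the lower bound to $A\|g\|^2\le 0$, so either way one has a contradiction. (The same dichotomy also drops out of Theorem \ref{thm.1} applied to $g$, since $(T^*)^ng=\bar\lambda^{\,n}g$ cannot converge to $0$ when $|\lambda|\ge 1$.)

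The case $|\lambda|<1$ is where I expect the main obstacle to lie, because the series converges to $(1-|\lambda|^2)^{-1}$ and the previous display degenerates to the compatible two-sided bound $A(1-|\lambda|^2)\|g\|^2\le|\langle g,f_1\rangle|^2\le B(1-|\lambda|^2)\|g\|^2$, which is not by itself a contradiction. To close this case I would exploit the fact that $T^*g=\bar\lambda g$ makes $g^{\perp}$ a $T$-invariant subspace, decompose each iterate as $T^kf_1=\lambda^k\beta\,g+\eta_k$ with $\eta_k\in g^{\perp}$ (where $\beta=\langle f_1,g\rangle/\|g\|^2$), and exploit the resulting recurrence $\eta_{k+1}=\lambda^k\beta\,w+T\eta_k$ with $w:=Tg-\lambda g\in g^{\perp}$. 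Combining this with Proposition \ref{prop.2} (the closed range of $T$) and applying the frame inequality to carefully chosen vectors in $g^{\perp}$ should force the outstanding estimate to fail, thereby ruling out the remaining eigenvectors as well.
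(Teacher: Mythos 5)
Your reduction to the injectivity of $T^*-\bar\lambda I$ is the right move, and your treatment of the easy cases is correct: if $T^*g=\bar\lambda g$ with $g\neq 0$, then $\langle g,T^kf_1\rangle=\bar\lambda^{\,k}\langle g,f_1\rangle$, the lower frame bound forces $\langle g,f_1\rangle\neq 0$, and the Bessel bound (or Theorem \ref{thm.1}) then forces $|\lambda|<1$. But the gap you flag in the case $|\lambda|<1$ is not a technical obstacle that the $g^{\perp}$-decomposition will remove: the statement is false in that regime, so no argument can close that case. The standard dynamical-sampling example (see \cite{Aldroubi3.2017}) takes $T$ diagonal on $\ell^2(\mathbb{N})$ with $Te_j=\lambda_je_j$, where $\{\lambda_j\}$ is a separated Carleson sequence in the open unit disc (e.g.\ $\lambda_j=1-2^{-j}$), and $\varphi=(\sqrt{1-|\lambda_j|^2})_j$; then $\{T^k\varphi\}_{k=0}^{\infty}$ is a frame for $\ell^2(\mathbb{N})$, yet $T^*e_j=\bar\lambda_je_j$, so $R(T-\lambda_jI)$ is contained in the proper closed subspace $e_j^{\perp}$ and is not dense. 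Your own computation is exactly consistent with this example: $|\langle e_j,\varphi\rangle|^2=(1-|\lambda_j|^2)\|e_j\|^2$ sits comfortably between the two frame bounds, which is why no contradiction appears.

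For comparison, the paper argues along a completely different route: assuming $R(T-\lambda I)$ is not dense, it picks $\varphi_0$ outside the closure, builds by Hahn--Banach a functional $\Lambda$ annihilating $R(T-\lambda I)$ with $\Lambda(\varphi_0)=1$, observes $\Lambda(T^n\varphi)=\lambda^n\Lambda(\varphi)$, and then tries to force $\Lambda(\varphi_0)=0$ from the density of $span\{T^k\varphi_0\}_{k=0}^{\infty}$. That argument does not work either: the selection of $\varphi_0$ with the extra property $\overline{span}\{T^k\varphi_0\}_{k=0}^{\infty}=\mathcal{H}$ is unjustified, and the final step (that certain scalar sums tending to $0$ and to $\Lambda(\varphi_0)$ imply $\Lambda(\varphi_0)=0$) is a non sequitur. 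So the honest partial result you actually prove --- $T^*$ has no eigenvalue of modulus $\geq 1$, equivalently $R(T-\lambda I)$ is dense for all $|\lambda|\geq 1$ --- is the correct theorem hiding inside this proposition; the claim for arbitrary $\lambda\in\mathbb{C}$ should not be believed.
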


\begin{proof}

Since $\{T^kf_1\}_{k=0}^{\infty}$ is a frame for $\mathcal{H}$, $T
\in E(\mathcal{H})$. If for $\lambda \in \mathbb{C}$ the range of
$T-\lambda I$ is not dense in $\mathcal{H}$, then pick $\varphi_0
\in \mathcal{H} \backslash \overline{(T-\lambda I)\mathcal{H}}$
such that
$\overline{span}\{T^k\varphi_0\}_{k=0}^{\infty}=\mathcal{H}$.
Therefore, there exists a $c>0$ such that
\begin{eqnarray*}
\|\phi - \phi_0\|>c,\,\ \mbox{for all}\,\ \varphi \in (T-\lambda
I)\mathcal{H}~.
\end{eqnarray*}
Let $\mathcal{M}$ be the subspace generated by $(T-\lambda
I)\mathcal{H}$ and $\varphi_0$. We define
\begin{eqnarray*}
\Lambda : \mathcal{M}\longrightarrow \mathbb{C}, \,\
\Lambda(\varphi+\alpha\varphi_0)=\alpha~,
\end{eqnarray*}
for any $\varphi \in (T-\lambda I)\mathcal{H}$ and for all $\alpha
\in \mathbb{C}$. Then since $$c|\alpha|< |\alpha|
\|\varphi_0+\alpha^{-1}\varphi\|=\|\alpha\varphi_0+\varphi\|~,$$
it is easy to see that $\Lambda$ is a continuous linear functional
on $\mathcal{M}$ and its norm is at most $c^{-1}$. Also,
$\Lambda=0$ on $(T-\lambda I)\mathcal{H}$ and
$\Lambda(\varphi_0)=1$. Hence by Hahn-Banach theorem it can be
extend to $\mathcal{H}$, i.e., $\Lambda$ is a continuous linear
functional on $\mathcal{H}$ with $\Lambda((T-\lambda
I)\mathcal{H})=\{0\}$ and $\Lambda(\varphi_0)\neq 0$. Therefore,
we obtain
\begin{eqnarray*}
\Lambda(T\varphi)=\lambda\Lambda(\varphi), \; \mbox{for all}\,\
\varphi \in \mathcal{H}~,
\end{eqnarray*}
 and then
 \begin{eqnarray*}
\Lambda(T^n\varphi)=\lambda^n\Lambda(\varphi),  \; \mbox{for all}
\; n \in \mathbb{N}~ \; \mbox{and} \;\varphi \in \mathcal{H}.
\end{eqnarray*}
In particular,
$\Lambda(T^n\varphi_0)=\lambda^n\Lambda(\varphi_0)$.\\

Because $0$, $\varphi \in \mathcal{H}$ and
$\overline{span}\{T^kf_1\}_{k=0}^{\infty}=\mathcal{H}$, hence
there exist two sequences of positive integers
$\{n_k\}_{k=1}^{\infty}$  and $\{m_k\}_{k=1}^{\infty}$  which tend
to $\infty$ as $k\longrightarrow \infty$ such that
\begin{eqnarray*}
\sum_{i=1}^{n_k}c_{n_i}T^{n_i}\varphi_0\longrightarrow 0,\,\
\mbox{as}\,\ k\longrightarrow \infty~,
\end{eqnarray*}
and
\begin{eqnarray*}
\sum_{i=1}^{m_k}d_{m_{i}}T^{m_{i}}\varphi_0\longrightarrow
\varphi_0, \,\ \mbox{as}\,\ k\longrightarrow \infty~.
\end{eqnarray*}
 Then the continuity of $\Lambda$  implies that
\begin{eqnarray*}
\sum_{i=1}^{n_k}c_{n_{i}} \lambda^{n_{i}}
\Lambda(\varphi_0)\longrightarrow 0, \,\ \mbox{as} \,\
k\longrightarrow \infty~,
\end{eqnarray*}
and
\begin{eqnarray*}
\sum_{i=1}^{m_k}d_{m_{i}}\lambda^{m_{i}}
\Lambda(\varphi_0)\longrightarrow \Lambda(\varphi_0),\,\ \mbox{as}
\,\ k\longrightarrow \infty~.
\end{eqnarray*}\\
Therefore, it follows that  $\Lambda(\varphi_0)= 0$, and this
contradiction proves the claim.
\end{proof}

We give a characterization of frame sequence
$\{T^k\varphi\}_{k=0}^{\infty}$  in terms of the behavior spectrum
of $U^*U|_{N(U)^{\perp}}$:

\begin{prop}\label{Prop.5}
Let $0\neq T \in B(\mathcal{H})$ and for some $\varphi \in
\mathcal{H}$, the series $\sum_{k=0}^{\infty}c_k T^k\varphi$
converges, for all $\{c_k\}_{k=0}^{\infty}\in
\ell^2(\mathbb{N}_0)$. Then $\{T^k\varphi\}_{k=0}^{\infty}$ is a
frame sequence for $\mathcal{H}$ if and only if the spectrum of
$U^*U|_{N(U)^{\perp}}$ is contained in the half-opened interval
$]0, \|U\|^2]$,
where $(N(U))^{\perp}$ denotes the orthogonal complement of null
space $N(U)$. In this case, we have
$$\sigma(U^*U)\subseteq [0, \|U\|^2].$$
\end{prop}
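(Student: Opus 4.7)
The plan is to reduce the frame sequence condition to a spectral property of the bounded positive operator $U^*U$ restricted to $N(U)^\perp$. First, I would observe that the hypothesis---that $\sum_{k=0}^\infty c_k T^k \varphi$ converges for every $\{c_k\}_{k=0}^\infty \in \ell^2(\mathbb{N}_0)$---makes the synthesis operator
$$U:\ell^2(\mathbb{N}_0) \longrightarrow \mathcal{H}, \qquad U(\{c_k\}_{k=0}^\infty) = \sum_{k=0}^\infty c_k T^k \varphi,$$
everywhere defined and linear; applying the Banach--Steinhaus theorem to the bounded truncations (or, equivalently, the closed graph theorem) then forces $U$ to be bounded. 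In particular $\{T^k\varphi\}_{k=0}^\infty$ is a Bessel sequence with bound $\|U\|^2$, the upper frame inequality is automatic, and the whole statement reduces to controlling a lower bound on $\overline{span}\{T^k\varphi\}_{k=0}^\infty = \overline{ran\,U}$.

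Next, I would invoke the standard characterization: a Bessel sequence is a frame sequence if and only if its synthesis operator $U$ has closed range, equivalently $U|_{N(U)^\perp}$ is bounded below. Since $N(U^*U) = N(U)$, we have $ran(U^*U) \subseteq N(U)^\perp$, so the restriction $R := U^*U|_{N(U)^\perp}$ is a well-defined bounded positive self-adjoint operator on the Hilbert space $N(U)^\perp$. The identity $\|Uc\|^2 = \langle Rc, c\rangle$ for $c \in N(U)^\perp$ then yields the chain of equivalences: $U|_{N(U)^\perp}$ is bounded below by some $A>0$ iff $R \ge A\, I$ on $N(U)^\perp$ iff $R$ is boundedly invertible in $B(N(U)^\perp)$ iff $0 \notin \sigma(R)$.

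Combining these equivalences with the routine spectral estimate $\sigma(R) \subseteq [0, \|R\|] \subseteq [0, \|U\|^2]$ (using $\|U^*U\|=\|U\|^2$) establishes the desired biconditional $\sigma(U^*U|_{N(U)^\perp}) \subseteq ]0,\|U\|^2]$. The final assertion $\sigma(U^*U) \subseteq [0, \|U\|^2]$ is then immediate, since $U^*U \in B(\ell^2(\mathbb{N}_0))$ is positive with norm exactly $\|U\|^2$, so its spectrum lies in $[0,\|U^*U\|]$. The main obstacle I anticipate is conceptual rather than computational: one must carefully justify that $R$ really is an operator on the subspace $N(U)^\perp$ (via the identity $N(U^*U)=N(U)$), that its spectrum is taken in $B(N(U)^\perp)$ rather than in $B(\ell^2(\mathbb{N}_0))$, and that the lower frame bound corresponds precisely to the absence of $0$ from the full spectrum $\sigma(R)$ rather than merely from the point spectrum of $R$.
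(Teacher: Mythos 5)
Your proposal is correct and follows essentially the same route as the paper: both reduce the frame-sequence property to closedness of the range of the synthesis operator $U$ (Christensen's Corollary 5.5.2), translate that into bijectivity of the positive operator $U^*U|_{N(U)^{\perp}}$ on $N(U)^{\perp}$, hence into $0\notin\sigma(U^*U|_{N(U)^{\perp}})$, and finish with the standard containment $\sigma(U^*U)\subseteq[0,\|U\|^2]$. Your intermediate step via ``$U|_{N(U)^{\perp}}$ bounded below iff $\langle U^*Uc,c\rangle\geq A\|c\|^2$'' is just a quadratic-form rephrasing of the paper's ``$R(U)$ closed iff $R(U^*U)$ closed iff $U^*U|_{N(U)^{\perp}}$ bijective,'' so there is no substantive difference.
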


\begin{proof} Because $\sum_{k=0}^{\infty}c_k T^k\varphi$ is well-defined
for all $\{c_k\}_{k=0}^{\infty}\in \ell^2(\mathbb{N}_0)$,
$\{T^k\varphi\}_{k=0}^{\infty}$ is a Bessel sequence for
$\mathcal{H}$ and whose the synthesis operator defined by
$$U(\{c_k\}_{k=0}^{\infty})= \sum_{k=0}^{\infty}c_k T^k\varphi,$$ is well-defined and bounded on
$\ell^2(\mathbb{N}_0)$. We first recall two well-known results for
a bounded linear operator $\Lambda : \mathcal{H}\longrightarrow
\mathcal{K}$, where $\mathcal{K}$ denotes a Hilbert space:
\begin{itemize}
\item $R(\Lambda)$ is closed in $\mathcal{K}$ if and only if
$R(\Lambda^*\Lambda)$ is closed in
$\mathcal{H}$, and \\
$$\overline{R(\Lambda^*)}=(N(\Lambda))^{\perp}=(N(\Lambda^*\Lambda))^{\perp}=\overline{R(\Lambda^*\Lambda)}.$$

\item $N(\Lambda)$ and $(N(\Lambda))^{\perp}$ are invariant under $\Lambda^*\Lambda,$ and\\
$$\sigma(\Lambda^*\Lambda)=\sigma(\Lambda^*\Lambda|_{N(\Lambda)}) \cup
\sigma(\Lambda^*\Lambda|_{N(\Lambda)^{\perp}})\subseteq [0,
\|\Lambda\|^2].$$
\end{itemize}
 We now consider positive operator: $$U^*U|_{N(U)^{\perp}}:(N(U))^{\perp}
\rightarrow (N(U))^{\perp}.$$ Clearly, it is injective and whose
range $R(U^*U|_{N(U)^{\perp}})=R(U^*U)$ is dense in
$(N(U))^{\perp}$. Now, we have
\begin{eqnarray*}
R(U)\,\ \mbox{is closed in}\,\ \mathcal{H}&\Longleftrightarrow&
R(U^*U) \,\ \mbox{is closed in}\,\ \mathcal{H},\\
&\Longleftrightarrow& U^*U|_{N(U)^{\perp}} \,\ \mbox{is
bijective},\\ &\Longleftrightarrow& 0 \notin
\sigma(U^*U|_{N(U)^{\perp}}),\\
&\Longleftrightarrow& \sigma(U^*U|_{N(U)^{\perp}})\subseteq ]0,
\|U\|^2].
\end{eqnarray*}
In this case, we obtain
\begin{eqnarray*}
\sigma(U^*U)=\sigma(U^*U|_{N(U)})\cup
\sigma(U^*U|_{N(U)^{\perp}})&\subseteq& \{0\}\cup ]0, \|U\|^2]\\
&=& [0, \|U\|^2].
\end{eqnarray*}
Now, using Corollary 5.5.2 in \cite{Christensen.2016} the proof is
complete.
\end{proof}

\bigskip
Let $e$ be a unit vector in $\mathcal{H}$. For every $g\in
\mathcal{H}$, we define $\Lambda_{g}^e:\mathcal{H} \rightarrow
\mathcal{H}$ by
\begin{eqnarray}
 \Lambda_{g}^ef=\langle f, g\rangle e, \quad\ \forall f \in
\mathcal{H}.
\end{eqnarray}
Then $\Lambda_{g}^e$ is a bounded linear operator on
$\mathcal{H}$ which is called \textit{operator response} of $g$ with respect to $e$.\\

Authors in \cite{Cabrelli.2020}, characterized frames of the form
$\{T^nf_k\}_{n\in \mathbb{N}, K\in I}$, where $T \in
B(\mathcal{H})$ is a normal operator, $I$ is a countable index
set, and $\{f_k\}_{k=1}^{\infty} \subset \mathcal{H}$. In the
following result, we characterize frames of the form
$\{T^nf_{1}\}_{n=0}^{\infty}$ with the help of operator response
of $f_1$ with respect to $e=\frac{f_1}{\|f_1\|}$:

\begin{prop}\label{prop.6}
Consider a Bessel sequence $\{f_k\}_{k=1}^{\infty}$ in
$\mathcal{H}$ having the form $\{T^nf_{1}\}_{n=0}^{\infty}$, for
some $T\in B(\mathcal{H})$. Then $\{f_k\}_{k=1}^{\infty}$ is a
frame for $\mathcal{H}$ if and only if the following conditions
satisfied:
\begin{itemize}
\item [(i)] $(T^{\ast})^n \varphi\rightarrow 0$ as $n\rightarrow
\infty$, for all $\varphi \in \mathcal{H}$.

\item [(ii)] There exists a boundedly invertible operator $S\in
B(\mathcal{H})$ such that $TST^{\ast}=S-\|f_1\|\Lambda_{f_1}^e$,
where $e=\frac{f_1}{\|f_1\|}$.
\end{itemize}
\end{prop}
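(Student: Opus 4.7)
The plan is to take $S$ to be the frame operator itself in the forward direction, and to invert the argument via an iteration of identity (ii) in the backward direction.

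For the forward implication, condition (i) is immediate from Theorem \ref{thm.1}. For (ii), I would set $Sf := \sum_{k=0}^{\infty} \langle f, T^k f_1\rangle T^k f_1$, which is bounded and boundedly invertible since $\{T^k f_1\}_{k=0}^{\infty}$ is a frame. A direct reindexing calculation gives
\begin{eqnarray*}
TST^{*}f = \sum_{n=0}^{\infty}\langle T^{*}f, T^n f_1\rangle T^{n+1}f_1 = \sum_{m=1}^{\infty}\langle f, T^m f_1\rangle T^m f_1 = Sf - \langle f,f_1\rangle f_1,
\end{eqnarray*}
and since $\|f_1\|\Lambda_{f_1}^e f = \|f_1\|\langle f, f_1\rangle \tfrac{f_1}{\|f_1\|} = \langle f, f_1\rangle f_1$, identity (ii) holds.

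For the converse, assume (i) and (ii). The key is to iterate (ii): applying $T$ on the left and $T^{*}$ on the right to the identity $TST^{*}=S-\|f_1\|\Lambda_{f_1}^e$ and observing that $T(\|f_1\|\Lambda_{f_1}^e)T^{*}f = \langle f, Tf_1\rangle Tf_1$, an induction yields
\begin{eqnarray*}
T^n S (T^{*})^n f \;=\; Sf - \sum_{k=0}^{n-1}\langle f, T^k f_1\rangle T^k f_1, \qquad n\in\mathbb{N}.
\end{eqnarray*}
The Bessel assumption on $\{T^k f_1\}_{k=0}^{\infty}$ forces the partial sums on the right to converge in norm; I would then argue that the left-hand side converges to $0$ in norm. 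This is the main technical step: testing against any $g\in\mathcal{H}$ gives
\begin{eqnarray*}
|\langle T^n S(T^{*})^n f, g\rangle| = |\langle S(T^{*})^n f, (T^{*})^n g\rangle| \leq \|S\|\,\|(T^{*})^n f\|\,\|(T^{*})^n g\| \longrightarrow 0
\end{eqnarray*}
by (i), so $T^n S (T^{*})^n f \to 0$ weakly; combined with the already established norm convergence of the partial sums, the norm limit must coincide with the weak one, giving $T^n S(T^{*})^n f \to 0$ in norm.

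Passing to the limit therefore yields $Sf=\sum_{k=0}^{\infty}\langle f, T^k f_1\rangle T^k f_1$, and taking the inner product with $f$ gives $\langle Sf, f\rangle = \sum_{k=0}^{\infty}|\langle f, T^k f_1\rangle|^2$. Since the right-hand side is real and non-negative for every $f$, $S$ is self-adjoint and positive; bounded invertibility of $S$ then supplies constants $A,B>0$ with $A\|f\|^2\leq \langle Sf,f\rangle\leq B\|f\|^2$, proving that $\{T^k f_1\}_{k=0}^{\infty}$ is a frame for $\mathcal{H}$. The delicate step I expect to work hardest to justify cleanly is the upgrade of the weak convergence $T^n S(T^{*})^n f \rightharpoonup 0$ to norm convergence, which relies crucially on the Bessel hypothesis to control the telescoping partial sums.
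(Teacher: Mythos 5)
Your proof is correct and follows essentially the same route as the paper's: the frame operator serves as $S$ in the forward direction, and the converse rests on iterating (ii) to get $T^nS(T^{*})^nf = Sf - \sum_{k=0}^{n-1}\langle f, T^kf_1\rangle T^kf_1$ and then using (i) to eliminate the left-hand side. The one point where you work harder than necessary is the weak-to-norm upgrade: the paper simply takes the inner product of the telescoped identity with $f$, so that $\langle S(T^{*})^nf,(T^{*})^nf\rangle \to 0$ follows immediately from (i) and boundedness of $S$, yielding $\langle Sf,f\rangle=\sum_{k=0}^{\infty}|\langle f,T^kf_1\rangle|^2$ without any convergence argument for the vector-valued partial sums.
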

\begin{proof}
Let $\{f_k\}_{k=1}^{\infty}=\{T^nf_{1}\}_{n=0}^{\infty}$ be a
frame for $\mathcal{H}$ with the frame operator $S$. Then by
Theorem \ref{thm.1}, (i) is satisfied. In order to prove (ii),
because $S$ is boundedly invertible and $e=\frac{f_1}{\|f_1\|}$,
for any $f\in \mathcal{H}$, we have
\begin{eqnarray}
TST^{\ast}f&=&\sum_{k=0}^{\infty}\langle T^{\ast}f,
T^kf_1\rangle T^{k+1}f_1 \notag \\
&=&\sum_{k=0}^{\infty}\langle f, T^{k+1}f_1\rangle T^{k+1}f_1 \notag \\
&=&\sum_{k=0}^{\infty}\langle f, T^kf_1\rangle T^kf_1-\langle f,
f_1\rangle f_1\\
&=& Sf-\|f_1\|\Lambda_{f_1}^ef \notag \\
&=&(S-\|f_1\|\Lambda_{f_1}^e)f, \notag
\end{eqnarray}
which proves (ii). Conversely, suppose that (i) and (ii) are
satisfied and  pick $\Lambda:=\|f_1\|\Lambda_{f_1}^e$. Then we
obtain
\begin{eqnarray*}
T^2S(T^{\ast})^2=T(TST^{\ast})T^{\ast}=T(S-\Lambda)T^{\ast}\\
=TST^{\ast}-T\Lambda T^{\ast}=S-(\Lambda+T\Lambda T^{\ast}).
\end{eqnarray*}
By induction, we get
\begin{eqnarray}\label{eq.6}
T^nS(T^{\ast})^n=S-\sum_{k=0}^{n-1}T^k\Lambda (T^{\ast})^k.
\end{eqnarray}

\bigskip

On the other hand, for any $f \in \mathcal{H}$ we get
\begin{eqnarray}\label{eq.7}
\Lambda
(T^{\ast})^kf=\|f_1\|T_{f_1}^e(T^{\ast})^kf&=&\|f_1\|\langle
(T^{\ast})^kf,f_1\rangle e \notag \\
&=&\langle (T^{\ast})^kf,f_1\rangle f_1.
\end{eqnarray}
Therefore, by (\ref{eq.6}) and (\ref{eq.7}) we have
\begin{eqnarray*}
T^nS(T^{\ast})^nf=Sf-\sum_{k=0}^{n-1}\langle f, T^kf_1\rangle
T^kf_1.
\end{eqnarray*}

Hence, for any $f \in \mathcal{H}$ we obtain

\begin{eqnarray*}
\langle S(T^{\ast})^nf, (T^{\ast})^nf \rangle =\langle Sf,f\rangle
- \sum_{k=0}^{n-1}|\langle f, T^kf_1\rangle|^2.
\end{eqnarray*}
Applying (i) for any $f \in \mathcal{H}$, we get
\begin{eqnarray*}
\sum_{k=0}^{\infty}|\langle f, T^kf_1\rangle|^2=\langle
Sf,f\rangle-\lim_{n\rightarrow \infty}\langle S(T^{\ast})^nf,
(T^{\ast})^nf \rangle = \langle Sf,f\rangle.
\end{eqnarray*}
Therefore, $S$ is self-adjoint and nonnegative operator. Now, if
$S_1$ denote the frame operator of the Bessel sequence
$\{f_k\}_{k=1}^{\infty}=\{T^nf_{1}\}_{n=0}^{\infty}$, then we have
\begin{eqnarray*}
\langle S_1f,f\rangle =\sum_{k=0}^{\infty}|\langle f,
T^kf_1\rangle|^2.
\end{eqnarray*}
Therefore, for any $f \in \mathcal{H},\,\ \langle
Sf,f\rangle=\langle S_1f,f\rangle$, which implies $S=S_1$. Now the
boundedly invertible of the operator $S$, concludes the proof.
\end{proof}

In the following lemma, we give a characterization of frames
$\{f_k\}_{k=1}^{\infty}$ in terms of the frame coefficients
$\{\langle f, f_k \rangle\}_{k=1}^{\infty}$:
\begin{lem}\label{eqn.88}
The sequence $\{f_k\}_{k=1}^{\infty}\subset \mathcal{H}$ is a
frame for $\mathcal{H}$ if and only if for any $0 \neq f \in
\mathcal{H}$, $\{\langle f, f_k \rangle\}_{k=1}^{\infty}$ is a
frame for $\mathbb{C}$.
\end{lem}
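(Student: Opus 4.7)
The plan rests on the elementary observation that a scalar sequence $\{c_k\}\subset\mathbb{C}$ is a frame for the one-dimensional Hilbert space $\mathbb{C}$ if and only if $0<\sum_k|c_k|^2<\infty$: the frame inequality collapses to $A|z|^2\le|z|^2\sum_k|c_k|^2\le B|z|^2$ for all $z\in\mathbb{C}$, which is nontrivial precisely when the sum is strictly positive and finite. With this reduction in hand, the lemma becomes the statement that $\{f_k\}$ is a frame for $\mathcal{H}$ exactly when the quadratic form $f\mapsto\sum_k|\langle f, f_k\rangle|^2$ is finite everywhere and strictly positive on $\mathcal{H}\setminus\{0\}$.

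The forward direction is then immediate: the frame inequality $A\|f\|^2\le\sum_k|\langle f, f_k\rangle|^2\le B\|f\|^2$, applied at each fixed $0\neq f$, delivers strict positivity and finiteness of $\sum_k|\langle f, f_k\rangle|^2$, which is all the one-dimensional frame condition requires.

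For the converse, I would first extract the upper Bessel bound from the pointwise finiteness. Define the partial analysis operators $T_N:\mathcal{H}\to\ell^2(\mathbb{N})$ by $T_Nf=\sum_{k=1}^N\langle f, f_k\rangle e_k$; these are bounded, and $\|T_Nf\|^2$ is pointwise bounded in $N$ by $\sum_k|\langle f, f_k\rangle|^2<\infty$. The uniform boundedness principle then yields $B>0$ with $\sum_k|\langle f, f_k\rangle|^2\le B\|f\|^2$ for every $f$, so $\{f_k\}$ is a Bessel sequence and the frame operator $S=UU^{*}$ is a bounded positive self-adjoint operator on $\mathcal{H}$; the pointwise positivity hypothesis then translates exactly to injectivity of $S$. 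The main obstacle, and where I expect all the real work to be, is upgrading this injectivity into a uniform lower bound $\langle Sf, f\rangle\ge A\|f\|^2$. I would attack this by contradiction: if no such $A>0$ exists then $0\in\sigma(S)$, and one seeks a unit-norm sequence $\{g_n\}$ with $Sg_n\to 0$ whose weak cluster points yield a nonzero vector annihilated by $S$, contradicting injectivity. This is the delicate step, since weak cluster points of sequences on the unit sphere can degenerate to zero in infinite dimensions, so careful use of compactness or of the spectral decomposition of $S$ will be needed.
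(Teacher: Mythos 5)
Your reduction of the scalar frame condition to $0<\sum_k|c_k|^2<\infty$, your forward direction, and your uniform boundedness argument for the Bessel bound are all correct. But the step you yourself flag as delicate --- upgrading injectivity of the frame operator $S$ to a uniform lower bound --- is not merely delicate, it is impossible: an injective positive bounded operator on an infinite-dimensional Hilbert space need not be bounded below. Concretely, take an orthonormal basis $\{e_k\}_{k=1}^{\infty}$ and set $f_k=e_k/\sqrt{k}$. Then for every $f\neq 0$ one has $0<\sum_{k=1}^{\infty}|\langle f,f_k\rangle|^2=\sum_{k=1}^{\infty}\tfrac{1}{k}|\langle f,e_k\rangle|^2\leq\|f\|^2<\infty$, so each scalar sequence $\{\langle f,f_k\rangle\}_{k=1}^{\infty}$ is a frame for $\mathbb{C}$ (with bounds depending on $f$); yet $\{f_k\}_{k=1}^{\infty}$ is not a frame for $\mathcal{H}$, since $\sum_{k}|\langle e_n,f_k\rangle|^2=1/n\to 0$ while $\|e_n\|=1$. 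Under the reading of the hypothesis you have adopted --- which is the natural one --- the converse implication is therefore false, and no amount of care with weak cluster points or spectral decompositions will close the gap.

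The paper's own proof of the converse sidesteps this by silently assuming that the scalar frames $\{\langle f,f_k\rangle\}_{k=1}^{\infty}$ admit bounds $A,B$ that are \emph{independent} of $f$; with that extra uniformity the claim reduces to the normalization $f\mapsto f/\|f\|$ and is essentially a restatement of the definition of a frame. (Note the asymmetry: the paper's forward direction produces bounds $A\|f\|^2$ and $B\|f\|^2$, which do depend on $f$, so even the paper's "if and only if" only holds together if one tracks how the bounds scale with $\|f\|$.) To make your converse work you must either build this uniformity of the scalar frame bounds into the hypothesis, or weaken the conclusion to: $\{f_k\}_{k=1}^{\infty}$ is a Bessel sequence whose frame operator is injective --- which is exactly what your argument, as written, actually establishes.
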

\begin{proof}
Let $0 \neq f \in \mathcal{H}$, and let $\{f_k\}_{k=1}^{\infty}$
be a frame for $\mathcal{H}$ with bounds $A,B$. For any $\alpha,
\beta \in \mathbb{C}$, we have $\langle \alpha,\beta
\rangle=\alpha \overline{\beta}$. Therefore, we have
\begin{eqnarray*}
\sum_{k=1}^{\infty} | \left \langle \langle f, f_k\rangle,g \right
\rangle |^2 =\sum_{k=1}^{\infty} |\left\langle f, f_k
\right\rangle \overline{g}|^2=|g|^2\sum_{k=1}^{\infty}
|\left\langle f, f_k \right\rangle |^2, \,\ \forall g \in
\mathbb{C}.
\end{eqnarray*}

Using lower and upper frame inequalities of the frame
$\{f_k\}_{k=1}^{\infty}$, we get

\begin{eqnarray*}
A|g|^2 \|f\|^2 \leq \sum_{k=1}^{\infty} |\left\langle \langle f,
f_k\rangle,g \right\rangle|^2 \leq B |g|^2 \|f\|^2, \,\ \forall g
\in \mathbb{C}.
\end{eqnarray*}
That is, $\{\langle f, f_k \rangle\}_{k=1}^{\infty}$ is a frame
for $\mathbb{C}$ with bounds $A\|f\|^2$ and $B\|f\|^2$.
Conversely, let for any $0 \neq f \in \mathcal{H}, \{\langle f,
f_k \rangle\}_{k=1}^{\infty}$ be a frame for $\mathbb{C}$ with
bounds $A$ and $B$. Then for any $0 \neq g \in \mathbb{C}$ we have
\begin{eqnarray*}
A|g|^2 \leq \sum_{k=1}^{\infty} |\left\langle \langle f,
f_k\rangle,g \right\rangle|^2 =\sum_{k=1}^{\infty} |\left\langle
f, f_k \right\rangle \overline{g}|^2 \leq B |g|^2.
\end{eqnarray*}
Therefore, we obtain
\begin{eqnarray}\label{eqn77}
A \leq \sum_{k=1}^{\infty} |\left\langle f, f_k \right\rangle|^2
\leq B, \,\ \forall f \in \mathcal{H}\setminus \{0\}.
\end{eqnarray}
If we replace $f$ by $\frac{f}{\|f\|}$ in (\ref{eqn77}), then for
any $f \in \mathcal{H}\setminus \{0\}$, we get
\begin{eqnarray*}
A \|f\|^2 \leq \sum_{k=1}^{\infty} |\left\langle f, f_k
\right\rangle|^2 \leq B \|f\|^2,
\end{eqnarray*}
that is, $\{f_k\}_{k=1}^{\infty}$ is a frame for $\mathcal{H}$.
\end{proof}

If we consider frames with indexing over the set of integer
numbers $\mathbb{Z}$ instead of $\mathbb{N}$, then we have the
following result.
\begin{prop}
If $\{T^k f_0\}_{k\in \mathbb{Z}}$ is a tight frame in
$\mathcal{H}$ for some invertible operator $T \in B(\mathcal{H})$,
then $T$ is an unitary.
\end{prop}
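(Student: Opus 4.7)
The whole argument should rest on the translation invariance of the two-sided index set $\mathbb{Z}$. I would begin by writing out the tight frame identity: there is a constant $A>0$ such that
$$A\|f\|^{2}\;=\;\sum_{k\in\mathbb{Z}}|\langle f,T^{k}f_{0}\rangle|^{2},\qquad \forall f\in\mathcal{H}.$$

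The key move is to apply the bijective shift $k\mapsto k+1$ of $\mathbb{Z}$, which leaves the sum on the right unchanged. Rewriting a typical term as $\langle f,T^{k+1}f_{0}\rangle=\langle T^{\ast}f,T^{k}f_{0}\rangle$ and applying the tight frame identity to $T^{\ast}f$, I obtain
$$A\|f\|^{2}\;=\;\sum_{k\in\mathbb{Z}}|\langle T^{\ast}f,T^{k}f_{0}\rangle|^{2}\;=\;A\|T^{\ast}f\|^{2},$$
so $\|T^{\ast}f\|=\|f\|$ for every $f\in\mathcal{H}$. Since $\langle (TT^{\ast}-I)f,f\rangle=0$ for every $f\in\mathcal{H}$, polarization forces $TT^{\ast}=I$.

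At this stage the invertibility hypothesis enters: multiplying $TT^{\ast}=I$ on the left by $T^{-1}$ yields $T^{\ast}=T^{-1}$, whence $T^{\ast}T=I$ as well, and $T$ is unitary.

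There is no real obstacle; the only thing worth noting is that two-sided indexing is essential for the shift trick, because the one-sided analogue $\sum_{k\geq 0}|\langle f,T^{k}f_{0}\rangle|^{2}$ would drop its $k=0$ term after reindexing and the identity $A\|T^{\ast}f\|^{2}=A\|f\|^{2}$ would no longer follow. The invertibility assumption is then exactly what upgrades the co-isometry identity $TT^{\ast}=I$ to full unitarity.
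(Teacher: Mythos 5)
Your proof is correct, but it follows a genuinely different route from the paper's. The paper leans on an external result (Theorem 2.3 of Christensen--Hasannasab) to know that $\|T\|=\|T^{-1}\|=1$, and then squeezes $\|f\|=\|T^{-1}Tf\|\leq\|T^{-1}\|\,\|Tf\|=\|Tf\|\leq\|T\|\,\|f\|=\|f\|$ to conclude that both $T$ and $T^{-1}$ are isometries, whence $T^{*}T=TT^{*}=I$. You instead work directly from the tight-frame identity and exploit the shift-invariance of the index set $\mathbb{Z}$: reindexing $k\mapsto k+1$ and writing $\langle f,T^{k+1}f_{0}\rangle=\langle T^{*}f,T^{k}f_{0}\rangle$ gives $A\|T^{*}f\|^{2}=A\|f\|^{2}$, so $T^{*}$ is an isometry, i.e. $TT^{*}=I$, and invertibility upgrades this co-isometry to a unitary. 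Your argument is self-contained and elementary, makes the role of tightness and of the two-sided index set completely transparent (as you note, the one-sided sum loses its $k=0$ term under the shift), and correctly identifies where invertibility is indispensable, since a co-isometry such as the backward shift is not unitary. The paper's argument is shorter on the page but outsources the essential norm estimates to the cited theorem; yours proves everything from the definition. Both are valid.
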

\begin{proof}
Since $\{T^kf_0\}_{k\in \mathbb{Z}}$ is a tight frame,
 using Theorem 2.3 in \cite{Christensen.2017}, for any $f \in \mathcal{H}$, we have\\
\begin{eqnarray}\label{eqn88}
\|f\|&=&\|T^{-1}Tf\|\leq \|T^{-1}\| \|Tf\| \\\notag
&=&\|Tf\|\leq\|T\|\|f\|=\|f\|.
\end{eqnarray}
Hence, $\|Tf\|=\|f\|$ for all $f \in \mathcal{H}$, so $T$ is an
isometry, i.e., $T^*T=I$. Similarly, using that $f=TT^{-1}f$ in
(\ref{eqn88}), we conclude that $T^{-1}$ is an isometry i.e,
$(T^{-1})^{*}T^{-1}=I$.

Therefore, it follows that $TT^*=T^{*}T=I$ and $T$ is a unitary.
\end{proof}
\begin{cor}
Consider a tight frame having a structure form $\{T^kf_0\}_{k\in
\mathbb{Z}}$, for some invertible operator $T \in B(\mathcal{H})$.
If $\{U^k g_0\}_{k\in \mathbb{Z}}$ is a dual frame of
$\{T^kf_0\}_{k\in \mathbb{Z}}$ for some $U \in B(\mathcal{H})$ and
for some $g_{0}\in \mathcal{H}$, then $U=T$.
\end{cor}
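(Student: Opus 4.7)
The plan is to leverage two ingredients at once: the preceding proposition forces $T$ to be unitary, and the dual-frame hypothesis yields the reconstruction formula $f = \sum_{k \in \mathbb{Z}} \langle f, T^k f_0\rangle \, U^k g_0$ for every $f \in \mathcal{H}$.

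Setting $a_m := \langle f_0, T^m f_0\rangle$, I would first substitute $f = T^j f_0$ into this reconstruction identity and use the unitarity of $T$ to rewrite $\langle T^j f_0, T^k f_0 \rangle = \langle f_0, T^{k-j} f_0\rangle = a_{k-j}$; a reindexing then gives the master identity
\begin{eqnarray*}
T^j f_0 \;=\; \sum_{m \in \mathbb{Z}} a_m \, U^{m+j} g_0, \qquad j \in \mathbb{Z}.
\end{eqnarray*}
The tight-frame hypothesis evaluated at $f_0$ shows that $(a_m)_{m\in\mathbb{Z}} \in \ell^2(\mathbb{Z})$, since $\sum_m |a_m|^2 = A \|f_0\|^2$; combined with the Bessel property of $\{U^k g_0\}_{k\in\mathbb{Z}}$, this guarantees that the series above converges unconditionally in $\mathcal{H}$.

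The decisive step is to apply the bounded operator $U$ to both sides of the master identity. Because $U$ is continuous it commutes with the convergent series, so the right-hand side turns into precisely the master identity at index $j+1$, namely $T^{j+1} f_0$. Comparing yields $U T^j f_0 = T^{j+1} f_0 = T(T^j f_0)$ for every $j \in \mathbb{Z}$. Hence $U$ and $T$ coincide on the family $\{T^j f_0\}_{j \in \mathbb{Z}}$, which is a frame and therefore has dense linear span in $\mathcal{H}$; the continuity of $U - T$ then forces $U = T$.

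I do not anticipate a serious obstacle: the only delicate point is justifying the interchange of $U$ with the infinite series, and that reduces to the $\ell^2$-Bessel bound already recorded. The small conceptual idea driving the proof is that unitarity of $T$ turns the Gram data $\langle T^j f_0, T^k f_0\rangle$ into a sequence depending only on $k-j$, which is exactly what lets one shift the index and apply $U$ term-by-term.
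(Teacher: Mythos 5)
Your proof is correct, but it runs along a different track from the paper's. The paper uses the reconstruction formula in the form $f=\sum_{k}\langle f, U^{k}g_{0}\rangle T^{k}f_{0}$, shifts the index on the $T$ side to factor out one copy of $T$, and recognizes the remaining sum as the reconstruction of $U^{*}f$; this gives the operator identity $TU^{*}=I$ in one line, after which unitarity of $T$ (from the preceding proposition) yields $U^{*}=T^{-1}=T^{*}$, hence $U=T$. You instead use the other half of the duality relation, $f=\sum_{k}\langle f, T^{k}f_{0}\rangle U^{k}g_{0}$, evaluate it at the frame vectors $T^{j}f_{0}$ themselves, exploit unitarity of $T$ to make the Gram coefficients depend only on $k-j$, and then apply $U$ term by term to shift the index on the $U$ side; you conclude by density of $\mathrm{span}\{T^{j}f_{0}\}$ rather than from a global operator identity. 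Both arguments hinge on the same two ingredients (unitarity of $T$ and an index shift inside the duality expansion), but yours requires the extra bookkeeping of the $\ell^{2}$ autocorrelation sequence and the unconditional convergence justification, which you handle correctly via the Bessel bound; the paper's version buys brevity by obtaining $TU^{*}=I$ for arbitrary $f$ at once, while yours has the minor advantage of making the dynamical content ($U$ acts as the shift $T^{j}f_{0}\mapsto T^{j+1}f_{0}$) explicit.
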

\begin{proof}
By the previous Proposition,  we have $T^{\ast}=T^{-1}$. On the
other hand, by the definition, for any $f \in \mathcal{H}$, we get
$$f=\sum_{k \in \mathbb{Z}}\langle f,\; U^{k}g_{0} \rangle T^{k}f_{0}=T\left(\sum_{k \in \mathbb{Z}}\langle U^{*}f,\; U^{k}g_{0} \rangle T^{k}f_{0} \right)=TU^{*}f,$$ i.e.  $TU^{*}=I.$
Since $T$ is invertible and $T^{\ast}=T^{-1}$, we obtain
$U^{*}=T^{*}$.
\end{proof}
\smallskip

The final result states a perturbation condition that preserves
operator representation structure of a frame.


\begin{prop}\label{prop.8}
Assume that $\{f_k\}_{k=1}^{\infty}=\{T^kf_k\}_{k=0}^{\infty}$ is
a frame for $\mathcal{H}$ for some $T$ on $\mathcal{H}$ and
$\{g_k\}_{k=1}^{\infty}$ is a sequence in $\mathcal{H}$. If for
$0<\lambda_{1},\; \lambda_{2}<1$ and any $f \in \mathcal{H}$, we
have
\begin{eqnarray}\label{Prop.33}
\|\sum_{k=1}^{\infty}c_k\langle f_k-g_k,f\rangle\|\leq \lambda_{1}
\|\sum_{k=1}^{\infty}c_k\langle f_k,f\rangle\|+ \lambda_{2}
\|\sum_{k=1}^{\infty}c_k\langle g_k,f\rangle\|,
\end{eqnarray}
for all finite sequences $\{c_k\}_{k=1}^{\infty}$. Then
$\{g_k\}_{k=1}^{\infty}$ is a frame for $\mathcal{H}$ which has an
operator representation form.
\end{prop}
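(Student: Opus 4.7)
The plan is to combine Lemma \ref{eqn.88} with Theorem \ref{thm.2} to show that $\{g_k\}_{k=1}^{\infty}$ is a frame, and then to invoke Proposition \ref{prop.1} to extract the operator representation. For the frame part, I would fix an arbitrary nonzero $f\in\mathcal{H}$ and view $\{\langle f_k,f\rangle\}_{k=1}^{\infty}$ and $\{\langle g_k,f\rangle\}_{k=1}^{\infty}$ as sequences in the one-dimensional Hilbert space $\mathbb{C}$. Using $\langle f_k-g_k,f\rangle=\langle f_k,f\rangle-\langle g_k,f\rangle$, the hypothesis (\ref{Prop.33}) is exactly the Casazza--Christensen perturbation inequality of Theorem \ref{thm.2} with $\mu=0$. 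By Lemma \ref{eqn.88}, $\{\langle f_k,f\rangle\}_{k=1}^{\infty}$ is a frame for $\mathbb{C}$, and since $\max(\lambda_1,\lambda_2)<1$, Theorem \ref{thm.2} yields positive frame bounds for $\{\langle g_k,f\rangle\}_{k=1}^{\infty}$. Letting $f$ vary and applying the converse direction of Lemma \ref{eqn.88}, one obtains that $\{g_k\}_{k=1}^{\infty}$ is a frame for $\mathcal{H}$.

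For the operator representation, the key is linear independence of $\{g_k\}$, because then Proposition \ref{prop.1} supplies an operator $T'$ with $\{g_k\}_{k=1}^{\infty}=\{(T')^{k}g_{1}\}_{k=0}^{\infty}$. Since $\{f_k\}=\{T^{k}f_{1}\}_{k=0}^{\infty}$, Proposition \ref{prop.1} already gives that $\{f_k\}$ itself is linearly independent. To transfer independence to $\{g_k\}$, I would assume a finite relation $\sum_{k=1}^{N}c_{k}g_{k}=0$; testing against every $f\in\mathcal{H}$ yields $\sum c_{k}\langle g_{k},f\rangle=0$, and substituting this into (\ref{Prop.33}) produces
\[
\Bigl|\sum c_{k}\langle f_{k},f\rangle\Bigr|\le\lambda_{1}\Bigl|\sum c_{k}\langle f_{k},f\rangle\Bigr|\qquad\forall\,f\in\mathcal{H}.
\]
Because $\lambda_{1}<1$, this forces $\sum c_{k}\langle f_{k},f\rangle=0$ for every $f$, hence $\sum c_{k}f_{k}=0$, and the linear independence of $\{f_k\}$ gives $c_{k}=0$ for all $k$.

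Thus $\{g_k\}_{k=1}^{\infty}$ is linearly independent, and since it is a frame for $\mathcal{H}$ it spans a dense (hence infinite-dimensional, inherited from the assumption that Proposition \ref{prop.1} applies to $\{f_k\}$) subspace. Proposition \ref{prop.1} then delivers the desired linear operator $T'$ with $\{g_k\}_{k=1}^{\infty}=\{(T')^{k}g_{1}\}_{k=0}^{\infty}$, completing the proof. The only mildly subtle step is the linear-independence argument, where the condition $\lambda_{1}<1$ is essential; the frame-bound part is a routine scalar specialization of Theorem \ref{thm.2} made possible by Lemma \ref{eqn.88}.
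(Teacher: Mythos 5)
Your proposal is correct and follows essentially the same route as the paper's own proof: Lemma \ref{eqn.88} combined with Theorem \ref{thm.2} (with $\mu=0$) for the frame property of $\{g_k\}_{k=1}^{\infty}$, and then the transfer of linear independence from $\{f_k\}_{k=1}^{\infty}$ to $\{g_k\}_{k=1}^{\infty}$ via the perturbation inequality and $\lambda_1<1$, followed by Proposition \ref{prop.1}. The only cosmetic difference is that you make explicit that the linear independence of $\{f_k\}_{k=1}^{\infty}$ itself comes from Proposition \ref{prop.1}, a step the paper leaves implicit.
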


\begin{proof}
Take $h_k:=\langle f_k,\; f \rangle$ and $t_k:=\langle g_k,\;f
\rangle$ for all $f \in \mathcal{H}$ . Then the perturbation
condition (\ref{Prop.33}) implies that
\begin{eqnarray}\label{Prop.34}
\|\sum_{k=1}^{\infty}c_k (h_k-t_k)\|\leq \lambda_{1}
\|\sum_{k=1}^{\infty}c_k h_k\|+\lambda_{2}
\|\sum_{k=1}^{\infty}c_kt_k\|.
\end{eqnarray}
Since $\{f_k\}_{k=1}^{\infty}$ is a frame for $\mathcal{H}$, by
Lemma \ref{eqn.88}, for all $0\neq f \in {\mathcal{H}}$,
$\{h_k\}_{k=1}^{\infty}$ is a frame for $\mathbb{C}$. Now, by
Theorem \ref{thm.2}, $\{t_k\}_{k=1}^{\infty}$ is a frame for
$\mathbb{C}$. Again, by Lemma \ref{eqn.88},
$\{g_k\}_{k=1}^{\infty}$ is a frame for $\mathcal{H}$.

Suppose that for any finite subset $J\subset \mathbb{N}$,
$\sum_{k\in J}c_kg_k=0$. Applying inequality (\ref{Prop.33}), we
get
\begin{eqnarray*}
\|\sum_{k=1}^{\infty}c_k\langle f_k,f\rangle\|\leq \lambda_1
\|\sum_{k=1}^{\infty}c_k\langle f_k,f\rangle\|, \,\ \forall f \in
\mathcal{H}.
\end{eqnarray*}
Because $0<\lambda_1<1$, we obtain $\sum_{k\in J}c_k\langle
f_k,f\rangle=0$, i.e.,
\begin{eqnarray*}
\langle \sum_{k\in J}c_kf_k,f\rangle=0, \,\ \forall f \in
\mathcal{H}.
\end{eqnarray*}
Thus, $\sum_{k\in J}c_kf_k=0$. Now, by Proposition \ref{prop.1},
for any $k \in J$, we get $c_k=0$. Therefore,
$\{g_k\}_{k=1}^{\infty}$ is linearly independent and again
applying Proposition \ref{prop.1}, it has an operator
representation form $\{V^ng_k\}_{n=0}^{\infty}$ for some linear
operator $V$.
\end{proof}




\smallskip

 We close this work by raising the following conjecture:


\smallskip

\textbf{Conjecture.} Let $T \in E(\mathcal{H})$. The Hilbert space
$\mathcal{H}$ can be decomposed into a direct sum of Hilbert
spaces $$\mathcal{H}=\bigoplus_{n \in \mathbb{N}}\mathcal{H}_n~,$$
where
\begin{itemize}
\item[(i)] $T\mathcal{H}_n \subset \mathcal{H}_n$. \item[(ii)] For
any $n \in \mathbb{N}$, there exists $\phi_n \in \mathcal{H}_n$
such that, $\{T^k\phi_n\}_{k=0}^{\infty}$ is a frame for
$\mathcal{H}_n$~.
\end{itemize}







\bibliographystyle{plain}
\end{document}